\newcommand{\shorteq}{\mathrel{\mkern0.2mu\mathpalette\shorteq@\relax\mkern0.2mu}}
\newcommand{\shorteq@}[2]{\scalebox{0.5}[1]{$\m@th#1=$}}
\newcommand{\longeq}[1]{\mathrel{\mathpalette\longeq@{#1}}}
\newcommand{\longeq@}[2]{%
  \begingroup
  \sbox\z@{$\m@th#1=$}%
  \ifdim#2<\wd\z@
    \resizebox{#2}{\height}{\box\z@}%
  \else
    \ifdim#2<3\wd\z@
      \hbox to #2{$\m@th#1=\hss=\hss=\hss=$}%
    \else
      \hbox to #2{$\m@th#1=\cleaders\hbox to 0.2\wd\z@{\hss$#1=$\hss}\hfil=$}%
    \fi
  \fi
  \endgroup
}
\newcommand{\verteq}{\rotatebox{270}{$\,\longeq{12pt}$}}
\newcommand{\equalto}[2]{\underset{\underset{\scriptstyle\overset{\mkern4mu\verteq}{#2}}{}}{#1}}
\def\F{{\mathbb F}}
\def\Fp{{\mathbb{F}_p}}
\def\Fq{{\mathbb{F}_q}}
\def\Fqs{{\mathbb{F}_{q^2}}}
\def\Fps{{\mathbb{F}_{p^2}}}
\def\Gp{{G_p}}
\def\Gq{{G_q}}
\def\Bp{{B_p}}
\def\Bq{{B_q}}
\def\Tq{{T_q}}
\def\Tqg{{T_q^\gamma}}
\def\Bqg{{B_q^\gamma}}
\def\chig{{\chi^\gamma}}
\def\Pp{{\mathbb{P}^1(\Fp)}}
\def\Pq{{\mathbb{P}^1(\Fq)}}
\def\Pps{{\mathbb{P}^1(\Fps)}}
\def\Pqs{{\mathbb{P}^1(\Fqs)}}
\def\Ogq{{O_{\Gq}}}
\def\Ogp{{O_{\Gp}}}
\def\Sgq{{\text{Stab}_{\Gq}}}
\def\Sgp{{\text{Stab}_{\Gp}}}
\def\we{{\widehat{\epsilon}}}
\def\wet{{\widehat{\eta}}}
\def\wz{{\widehat{0}}}
\def\wx{{\widehat{x}}}
\def\i{{\mathcal{I}_1}}
\def\id{{\mathcal{I}_2}}
\def\j{{\mathcal{J}}}
\def\jd{{\mathcal{J}'}}
\numberwithin{equation}{section}
\newtheorem{theorem}{Theorem}[section] 
\newtheorem{lemma}[theorem]{Lemma}
\newtheorem{proposition}[theorem]{Proposition}
\theoremstyle{remark}
\newtheorem{remark}{Remark}
\newtheorem{notation}{Notation}
\begin{document}

        \title[Restriction problem for mod \textup{p} representations]{Restriction problem for mod $p$ representations of $\text{GL}_2$ over a finite field}
	\author{Eknath Ghate and Shubhanshi Gupta}

        \address{School of Mathematics, TIFR, Mumbai - 400005, India.}	
	\email{eghate@math.tifr.res.in}
	
        \address{Department of Basic Sciences, IITRAM, Ahmedabad - 380026, India.}
	\email{shubhanshi.gupta.20pm@iitram.ac.in}

	\subjclass{Primary 20C20, 20C33; Secondary 12E20, 14L30}
        \keywords{Modular representations, restriction problem, projective space, Mackey's formula}
	\date{}

        \begin{abstract}
          Let $\Fq$ be the finite field with $q = p^f$ elements. We study the restriction of two classes of mod $p$ representations of $\Gq = \text{GL}_2({\Fq})$ to
  $\Gp = \text{GL}_2(\Fp)$.
  We first study the restrictions of principal series which are obtained by induction from a Borel subgroup $\Bq$. We then analyze the restrictions of inductions from an
  anisotropic torus $\Tq$ which are related to cuspidal representations. Complete decompositions are given in both cases according to the parity of $f$.
  The proofs depend on writing down explicit orbit decompositions of $\Gp \backslash \Gq / H$ where $H = \Bq$ or $\Tq$ using the fact that $\Gq/H$ is an explicit orbit in a certain projective line, along with Mackey theory.
	\end{abstract}
	\maketitle
	

\section{Introduction}
The \textit{restriction problem}, which examines how representations behave when restricted from a group to a subgroup, is a fundamental topic in representation theory.
In general, the restriction of an irreducible representation of a group to a subgroup is neither irreducible nor completely reducible. The general theory by which the
restriction of an irreducible representation of a group decomposes into irreducible representations of its subgroup is explained by \textit{branching rules}.
One is interested in computing the multiplicity with which an irreducible representation of the subgroup appears (as a subquotient) in the restriction of an irreducible
representation of the given group. More generally, one may ask how a member of a class of representations of a group decomposes under restriction into member of this class for the subgroup. 
In Kobayashi \cite{TK}, the author gives a survey of branching problems for complex representations of real reductive groups.
See also, for instance, \cite{CC}, \cite{GGP}, \cite{VP} for other aspects of the restriction problem. \\

The decomposition of a finite tensor product of representations of a group $G$ into its irreducible representations can be considered as part of a restriction problem as it can be seen as the restriction of the representation of the group $G\times G\times\ldots\times G$ (finitely many times) to the diagonal subgroup which is isomorphic to $G$. In this case, the branching is explained by what are called Clebsch-Gordan formulas.
This situation occurs naturally in the study of the restriction of irreducible mod $p$ representations of finite groups of Lie type, and in particular in the study of the restriction of irreducible mod $p$ representations (weights) of $\text{GL}_2(\Fq)$ to $\text{GL}_2(\Fp)$, where $\Fq = \F_{p^f}$ is a finite extension of degree $f \geq 1$ of the finite field $\Fp$ with $p$ elements. See, for instance, \cite{SG}
for an explicit decomposition in the case that $\Fq$ is a degree $f = 3$ extension of $\Fp$ using the Clebsch-Gordan formula from \cite{DJG}. 
\\  

In this context, it would be interesting to study the restriction problem for a slightly more complicated but natural class of representations of these groups. Let
$\Gp\coloneqq\text{GL}_2\left(\Fp\right)$ and $\Gq\coloneqq\text{GL}_2\left(\Fq\right)$. Let $\Bp$ and $\Bq$ denote the Borel subgroups of all upper triangular matrices of
$\Gp$ and $\Gq$, respectively.  A natural problem would be to
study the restriction to $\Gp$ of principal series representations of $\Gq$ obtained by inducing a mod $p$ character of $\Bq$.  Similarly, let $T_p$ and $T_q$ denote the anisotropic
torus of $\Gp$ and $\Gq$, respectively. One could also study the restrictions from $\Gq$ to $\Gp$ of representations induced from a mod $p$ character of $\Tq$. The latter representations are closely connected to the mod $p$ reductions of complex cuspidal representations of $\Gq$. \\

In this paper, we solve the restriction problem for both these classes of representations of $G_q$ completely, by decomposing the restrictions into such representations of $\Gp$ (possibly twisted by the Steinberg representation of $\Gp$). The explicit decompositions are contained in Theorems \ref{T1} and \ref{T2} which are the main results of this paper. 
The key tool, of course, is Mackey's restriction formula. However, to use it one needs to
make the orbit decomposition of $\Gp \backslash \Gq / H$ reasonably explicit where $H = \Bq$ or $\Tq$. This is done using the projective line as a crutch, noting that $\Gq/\Bq\simeq \Pq$
and $\Gq/\Tq$ is the orbit of an element in $\Pqs$ outside $\Pq$. These orbit decompositions may be of independent interest.

\section{Restriction of principal series representations}

Recall that for a mod $p$ character $\chi$ of $\Bq$, the  mod $p$ principal series representation
of $\Gq$ is the induced representation $\text{ind}_{\Bq}^{\Gq} \chi$. 
While the complete decomposition of the restriction of such a principal series representation of $\Gq$ to $\Gp$ is not immediately evident, one might expect
that the following related principal series of $\Gp$ be contained in its restriction to $\Gp$:
$$\text{ind}_{\Bp}^{\Gp}\left({\chi|}_{\Bp}\right)\hookrightarrow {\left.\text{ind}_{\Bq}^{\Gq}\chi\right|}_{\Gp}.$$
By Frobenius reciprocity, this is equivalent to checking whether ${\chi|}_{\Bp}\hookrightarrow {\left.\text{ind}_{\Bq}^{\Gq}{\chi}\right|}_{\Bp}$,
which is evidently true as $\chi\hookrightarrow {\left.\text{ind}_{\Bq}^{\Gq}{\chi}\right|}_{\Bq}$ (again by Frobenius reciprocity).
The real question then is what the other representations appearing in the decomposition of the restriction are.
In this section, we shall give a complete answer in terms of principal series representations, the Steinberg representation and cuspidal representations of $\Gp$.

\subsection{The action of $\Gp$ on $\Pq$}
There is a natural action of $\Gq$ on the projective space $\Pq$ given by
$$\left(\begin{matrix}
    a & b \\
    c & d
\end{matrix}\right)\cdot[x:y]=[ax+by:cx+dy],$$
for $\left(\begin{matrix}
    a & b \\
    c & d
\end{matrix}\right)\in\Gq$ and $[x:y]\in\Pq$. This action is transitive with $\Sgq([1:0])=\Bq$. We identify $\Pq$ with $\{[1:x]:x\in\Fq\}\cup\{[0:1]\}$ throughout the article.

\begin{notation}
    Denote $\wx\coloneqq[1:x]$ and $\widehat{\infty}\coloneqq[0:1]$.
\end{notation}

\begin{notation}
    Fix $\epsilon\in\Fqs\setminus\Fq\text{ and }\epsilon^2\in\Fq$.
    Denote the anisotropic torus of $\Gq$ by $$T_q\coloneqq\left\{\left(\begin{matrix}
    a & b \\
    b\epsilon^2 & a
\end{matrix}\right)\in\Gq:a,b \in \Fq \text{ not both zero} \right\}.$$
Analogously, for fixed $\eta\in\Fps\setminus\Fp\text{ and }\eta^2\in\Fp$, let
$$T_p\coloneqq\left\{\left(\begin{matrix}
    a & b \\
    b\eta^2 & a
\end{matrix}\right)\in\Gp: a,b \in \Fp \text{ not both zero} \right\}$$
denote the anisotropic torus of $\Gp$.
\end{notation}

\begin{lemma}\label{L1}
  Under the action of $\Gq$, we have the orbit decomposition  
  $$\Pqs =\Ogq(  \wz)\sqcup\Ogq(\we).$$ Moreover, the stabilizers of $\wz$ and $\we$ are equal to $\Bq$ and $T_q$, respectively.
\end{lemma}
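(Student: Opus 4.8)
The plan is to establish the two stabilizer statements first and then deduce the orbit decomposition purely by an orbit-counting argument, so that no direct ``reachability'' computation inside $\Pqs$ is needed.

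First I would compute $\Sgq(\wz)$. Writing a general $g = \left(\begin{smallmatrix} a & b \\ c & d\end{smallmatrix}\right) \in \Gq$, the action gives $g \cdot \wz = [a:c]$, which equals $\wz = [1:0]$ exactly when $c = 0$ (and then $a \neq 0$ automatically by invertibility). Hence $\Sgq(\wz)$ consists of the upper-triangular matrices, namely $\Bq$; this merely re-derives the stabilizer statement already recorded for the action on $\Pq$.

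Next, and this is the heart of the argument, I would compute $\Sgq(\we)$. The condition $g \cdot \we = \we$ reads $[a + b\epsilon : c + d\epsilon] = [1:\epsilon]$, i.e.\ $(a + b\epsilon,\, c + d\epsilon) = \lambda(1,\epsilon)$ for some $\lambda \in \Fqs^\times$. Solving in the first coordinate gives $\lambda = a + b\epsilon$, and substituting into the second gives $c + d\epsilon = (a + b\epsilon)\epsilon = a\epsilon + b\epsilon^2$. Here I would use crucially that $\epsilon^2 \in \Fq$, so that $\{1, \epsilon\}$ is an $\Fq$-basis of $\Fqs$; comparing the coefficients of $1$ and of $\epsilon$ forces $c = b\epsilon^2$ and $d = a$. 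Thus $g = \left(\begin{smallmatrix} a & b \\ b\epsilon^2 & a \end{smallmatrix}\right)$, which is exactly the shape of an element of $\Tq$. The one subtlety to check is that the invertibility condition matches the defining condition of $\Tq$: the determinant is $a^2 - b^2\epsilon^2$, and since $\epsilon \notin \Fq$ this vanishes only when $a = b = 0$, so ``$\det \neq 0$'' is equivalent to ``$a, b$ not both zero.'' This yields $\Sgq(\we) = \Tq$.

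Finally I would assemble the decomposition by counting. By the orbit--stabilizer theorem, $|\Ogq(\wz)| = |\Gq|/|\Bq| = q + 1$ and $|\Ogq(\we)| = |\Gq|/|\Tq| = q^2 - q$, using $|\Gq| = (q^2-1)(q^2-q)$, $|\Bq| = q(q-1)^2$, and $|\Tq| = q^2 - 1$. Since $\epsilon \notin \Fq$, the point $\we$ does not lie in $\Pq$, whereas $\Ogq(\wz) = \Pq$ because $\Gq$ preserves $\Pq$ (its entries lie in $\Fq$) and acts transitively on it; hence $\we \notin \Ogq(\wz)$, so the two orbits are distinct and therefore disjoint. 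Their sizes sum to $(q+1) + (q^2 - q) = q^2 + 1 = |\Pqs|$, which forces $\Pqs = \Ogq(\wz) \sqcup \Ogq(\we)$. The main obstacle is the stabilizer computation for $\we$ --- specifically, being careful with the projective scalar $\lambda$ and with matching the invertibility condition to the definition of $\Tq$ --- after which the orbit decomposition follows formally from the counting.
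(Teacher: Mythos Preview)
Your proof is correct and follows essentially the same approach as the paper's: both compute $\Sgq(\we)$ via the projective scalar $\lambda$ and linear independence of $1,\epsilon$ over $\Fq$, identify $\Ogq(\wz)=\Pq$ by transitivity, and then conclude the orbit decomposition by a cardinality count. Your version is slightly more detailed in verifying $\Sgq(\wz)=\Bq$ explicitly and in matching the invertibility condition with the defining condition of $\Tq$, but the strategy is identical.
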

\begin{proof}
  The transitivity of the $\Gq$ action on $\Pq$ gives the orbit $\Ogq(\wz)=\Pq=\{\wx:x\in\Fq\}\sqcup \{\widehat{\infty}\}$. That $\Sgq(\wz)=\Bq$ can easily be verified.
  Note that $\we\notin \Ogq(\wz)$ since $\epsilon\in\Fqs\setminus\Fq$. If we show that $\left|\Pqs\right|=\left|\Ogq(\wz)\sqcup \Ogq(\we)\right|$, then we are done.
  Let $g=\left(\begin{matrix}
    a & b \\
    c & d
      \end{matrix}\right)\in \Sgq(\we)$, that is, $g\cdot\we=\we$.
  Then for some $0\neq\lambda\in\Fqs, ~(a+b\epsilon,c+d\epsilon)=(\lambda,\lambda\epsilon) \text{ which gives }(a+b\epsilon)\epsilon=c+d\epsilon$. As $1,\epsilon$
  are linearly independent over $\Fq$, we get $a=d$ and $c=b\epsilon^2$. Thus $\Sgq(\we)=T_q$ and so the number of elements in
  $\Ogq(\we)=\frac{\left(q^2-1\right)\left(q^2-q\right)}{q^2-1}=q^2-q$ and $\left|\Ogq(\wz)\right|+\left|\Ogq(\we)\right|=q^2+1=\left|\Pqs\right|$.
\end{proof}

\begin{notation}
    For $f>2$, define the set $X=\left\{\begin{array}{ll}
    \Fq\setminus\Fp, & \text{if } 2\nmid f,  \\
        \Fq\setminus \Fps, &\text{if }2\mid f.
    \end{array}\right.$
\end{notation}

\begin{lemma}\label{L2}
    For all $x\in X$, the set $\{1,x,x^2\}$ is linearly independent over $\Fp$.
\end{lemma}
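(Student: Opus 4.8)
The plan is to reduce the linear independence statement to a single membership condition: for an element $x$ lying in some extension of $\Fp$, the set $\{1,x,x^2\}$ is linearly independent over $\Fp$ if and only if $x\notin\Fps$. I would first prove the direction I actually need, namely the contrapositive ``dependent $\Rightarrow x\in\Fps$''. Suppose $a+bx+cx^2=0$ is a nontrivial $\Fp$-linear relation with $a,b,c\in\Fp$. If $c\neq 0$, then $x$ is a root of a quadratic polynomial over $\Fp$, so $[\Fp(x):\Fp]\leq 2$ and hence $x\in\Fps$. If $c=0$ but $b\neq 0$, then $x=-a/b\in\Fp\subseteq\Fps$; and $c=b=0$ forces $a=0$, excluding this case. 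Thus any $\Fp$-linear dependence among $1,x,x^2$ places $x$ in $\Fps$. Consequently, to prove the lemma it suffices to show that every $x\in X$ satisfies $x\notin\Fps$.

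Next I would split according to the parity of $f$, exactly as $X$ is defined. In the even case $2\mid f$ we have $\Fps\subseteq\Fq$, and $X=\Fq\setminus\Fps$ by definition, so $x\in X$ gives $x\notin\Fps$ with nothing further to check. In the odd case $2\nmid f$ we have $X=\Fq\setminus\Fp$, and here the key point is the intersection computation inside a fixed algebraic closure of $\Fp$: since $\Fps=\F_{p^2}$ and $\Fq=\F_{p^f}$, we get $\Fps\cap\Fq=\F_{p^{\gcd(2,f)}}=\Fp$, using $\gcd(2,f)=1$. Therefore, for $x\in\Fq$, the membership $x\in\Fps$ is equivalent to $x\in\Fps\cap\Fq=\Fp$, so $x\in\Fq\setminus\Fp$ forces $x\notin\Fps$. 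Combining this with the reduction of the first paragraph settles both cases.

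The computations here are entirely elementary, so there is no serious obstacle; the only step requiring a little care is the intersection identity $\Fps\cap\Fq=\Fp$ in the odd case, which rests on the standard fact $\F_{p^m}\cap\F_{p^n}=\F_{p^{\gcd(m,n)}}$ for subfields of a common algebraic closure. I would state that fact explicitly and note that the hypothesis $f>2$ built into the definition of $X$ is what guarantees that elements of $X$ actually have degree at least $3$ over $\Fp$ (for odd $f$ the smallest divisor of $f$ exceeding $1$ is at least $3$), making the conclusion nonvacuous; the argument itself, however, holds verbatim from the definition of $X$ without invoking $f>2$ beyond this.
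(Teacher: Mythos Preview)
Your proof is correct and follows essentially the same contrapositive strategy as the paper: a nontrivial $\Fp$-relation among $1,x,x^2$ forces $x$ to satisfy a polynomial of degree at most $2$ over $\Fp$, hence $x\in\Fps$, and then one checks that $x\in X$ excludes this. The only cosmetic difference is that the paper splits the quadratic case into ``reducible'' versus ``irreducible'' and invokes $\Fps\hookrightarrow\Fq\iff 2\mid f$, whereas you phrase the odd case via the intersection identity $\Fps\cap\Fq=\F_{p^{\gcd(2,f)}}=\Fp$; these are equivalent formulations of the same fact.
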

\begin{proof}
    Let $x\in\Fq$ and let $\{1,x,x^2\}$ be linearly dependent over $\Fp$. Then there exist $a_0,a_1,a_2\in\Fp$, not all zero, such that $a_0+a_1x+a_2x^2=0$. That is, $x$ satisfies a polynomial of degree at most $2$ over $\Fp$. If either $a_2=0$ or the polynomial is reducible,
    then $x\in\Fp$. If $a_2\neq 0$ and the polynomial is irreducible, then $\Fps\simeq \Fp(x)\hookrightarrow\Fq$. But $\Fps\hookrightarrow\Fq$ iff $2\mid f$. Thus, $x\in\Fp$ when $2\nmid f$ and $x\in\Fps$ when $2\mid f$.
\end{proof}

\begin{lemma}\label{L4}
    With respect to the $\Gp$ action on $\Pq$,
    $$\mathrm{Stab}_\Gp(\wx)=Z_p,\text{ for all }x\in X,$$ where $Z_p$ is the centre of $\Gp$. 
\end{lemma}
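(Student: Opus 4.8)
The plan is to establish the two inclusions separately. The inclusion $Z_p\subseteq\mathrm{Stab}_\Gp(\wx)$ is immediate, since scalar matrices act trivially on all of $\Pq$ and so in particular fix $\wx$; the whole content lies in the reverse inclusion, which I would obtain by a direct computation that feeds into Lemma \ref{L2}.

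First I would take an arbitrary $g=\left(\begin{matrix} a & b \\ c & d\end{matrix}\right)\in\Gp$ with $g\cdot\wx=\wx$ and unwind the fixed-point condition using the action recalled at the start of the section. This reads $[a+bx:c+dx]=[1:x]$, so there is a scalar $\lambda\in\Fq^\times$ with $a+bx=\lambda$ and $c+dx=\lambda x$. Substituting the first relation into the second eliminates $\lambda$ and produces the single quadratic relation $bx^2+(a-d)x-c=0$. The point to stress here is that the three coefficients $b$, $a-d$, and $-c$ all lie in $\Fp$, precisely because $g$ has entries in $\Fp$.

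The key step is then to invoke Lemma \ref{L2}: for $x\in X$ the set $\{1,x,x^2\}$ is linearly independent over $\Fp$, so the displayed relation forces $b=0$, $a-d=0$, and $c=0$. Hence $g=\left(\begin{matrix} a & 0 \\ 0 & a\end{matrix}\right)$ with $a\in\Fp^\times$, i.e.\ $g\in Z_p$, which finishes the reverse inclusion and the proof.

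I do not expect a serious obstacle, but the one subtle point — and the reason Lemma \ref{L2} is phrased for $\{1,x,x^2\}$ rather than merely for $\{1,x\}$ — is the appearance of the quadratic term $x^2$ after clearing $\lambda$. It is exactly the independence of $x^2$ from $1$ and $x$ over $\Fp$ that annihilates the off-diagonal entries and pins the stabilizer down to the scalars; if one only knew $\{1,x\}$ to be independent, the equation would leave room for nonscalar elements. This is also why $x$ must be drawn from $X$ and not merely from $\Fq\setminus\Fp$, since for $x\in\Fps$ with $2\mid f$ the relation $bx^2+(a-d)x-c=0$ could be solved nontrivially.
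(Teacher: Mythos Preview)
Your proof is correct and follows essentially the same route as the paper: compute the fixed-point condition to obtain a quadratic relation in $x$ with $\Fp$-coefficients, then apply Lemma~\ref{L2} to force $b=c=0$ and $a=d$. Your write-up is somewhat more detailed (you make the easy inclusion $Z_p\subseteq\mathrm{Stab}_{\Gp}(\wx)$ explicit and add commentary on why the $x^2$ term is the crux), but the argument is the same.
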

\begin{proof}
    Let $\left(\begin{matrix}
    a & b \\
    c & d
\end{matrix}\right)\in\Sgp(\wx)$, then $(a+bx,c+dx)=(\lambda,\lambda x)$, for some $0\neq \lambda\in\Fq$. So, we get $c+(d-a)x-bx^2=0$ and by Lemma \ref{L2}, $a=d$ and $b=c=0$.
\end{proof}

\begin{proposition}\label{P1}
    The decomposition of $\Pq$ into its $\Gp\text{-orbits}$ is given as
    \begin{numcases}
{\Pq=}\Ogp(\wz)\sqcup\bigsqcup\limits_{\wx\in\i}\Ogp(\wx), \hspace{1.5cm}\text{ if }2\nmid f, \label{E11}\\
\Ogp(\wz)\sqcup\Ogp(\wet)\sqcup \bigsqcup\limits_{\wx\in\id}\Ogp(\wx), \text{ if }2\mid f,\label{E22}
    \end{numcases}
where $\eta\in\Fps\setminus\Fp$ with $\eta^2\in\Fp$ and where $\i$ and $\id$ form a complete set of representatives of the $\Gp\text{-orbits}$ in $\Pq\setminus\Pp$ and $\Pq\setminus\Pps$, respectively. Also, 
$$\left| \i\right|=\frac{p^{f-1}-1}{p^2-1} \text{ and } \left| \id\right|=\frac{p\left(p^{f-2}-1\right)}{p^2-1}.$$
\end{proposition}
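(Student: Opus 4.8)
The plan is to construct the decomposition stratum by stratum, first pinning down the distinguished orbits and then counting the remaining ``generic'' orbits by an orbit--stabilizer argument. I would begin by noting that $\Pp=\{\wx:x\in\Fp\}\sqcup\{\widehat{\infty}\}$ is $\Gp$-stable and that $\Gp\subseteq\Gq$ acts on it exactly as $\text{GL}_2(\Fp)$ acts on its own projective line; since this action is transitive and $\wz\in\Pp$, we get $\Ogp(\wz)=\Pp$, an orbit of size $p+1$. This is the common term $\Ogp(\wz)$ in both \eqref{E11} and \eqref{E22}. The counting engine is the orbit--stabilizer theorem together with Lemma \ref{L4}: as $|\Gp|=(p^2-1)(p^2-p)$ and $|Z_p|=p-1$, every $\wx$ with $x\in X$ has stabilizer exactly $Z_p$ and hence lies in an orbit of the uniform size $|\Gp|/|Z_p|=p(p^2-1)$.

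For odd $f$, Lemma \ref{L2} (via the fact that $\Fps$ does not embed in $\Fq$) forces $X=\Fq\setminus\Fp$, so every point of $\Pq\setminus\Pp$ is generic with orbit size $p(p^2-1)$. Dividing the count $|\Pq\setminus\Pp|=(q+1)-(p+1)=p^f-p$ by $p(p^2-1)$ yields $\frac{p^{f-1}-1}{p^2-1}$ orbits; selecting one representative from each produces the set $\i$ with the asserted cardinality, which proves \eqref{E11}.

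For even $f$ I would first isolate the exceptional orbit. Here $X=\Fq\setminus\Fps$, so Lemma \ref{L4} does \emph{not} apply to the points of $\Pps\setminus\Pp$; instead I invoke the exact $p$-analogue of Lemma \ref{L1}, with $(\Gp,\Pps,\eta)$ in place of $(\Gq,\Pqs,\epsilon)$, to conclude $\Pps=\Ogp(\wz)\sqcup\Ogp(\wet)$ with $\Sgp(\wet)=\Tp$. Thus $\Ogp(\wet)=\Pps\setminus\Pp$ is a single orbit of size $p^2-p$, accounting for the extra term in \eqref{E22}. The leftover points form $\Pq\setminus\Pps$, all generic of orbit size $p(p^2-1)$, and dividing $|\Pq\setminus\Pps|=(q+1)-(p^2+1)=p^f-p^2$ by $p(p^2-1)$ gives $\frac{p(p^{f-2}-1)}{p^2-1}$ orbits, which furnishes $\id$ and completes \eqref{E22}.

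The main obstacle is conceptual rather than computational: one must recognize that in the even case the intermediate stratum $\Pps\setminus\Pp$ is genuinely special---its points carry the larger stabilizer $\Tp$ rather than $Z_p$---and so cannot be counted by the generic formula, whereas for odd $f$ this stratum is empty since $\Fps$ does not embed in $\Fq$. Once the generic stabilizer is pinned down by Lemmas \ref{L2} and \ref{L4}, and the exceptional orbit is identified through the $p$-analogue of Lemma \ref{L1}, the remaining work is purely arithmetic; the integrality of the two orbit counts (equivalent to $2\mid f-1$ and $2\mid f-2$ respectively) serves as a reassuring consistency check.
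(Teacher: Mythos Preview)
Your proposal is correct and follows essentially the same argument as the paper: stratify $\Pq$ by the $\Gp$-stable subset $\Pp$ (odd $f$) or $\Pps$ (even $f$), use transitivity on $\Pp$ and the $f=1$ case of Lemma~\ref{L1} to identify the distinguished orbits, and then count the remaining orbits via Lemma~\ref{L4} and orbit--stabilizer. The only minor quibble is phrasing: $X$ equals $\Fq\setminus\Fp$ for odd $f$ by definition, not by Lemma~\ref{L2}; Lemma~\ref{L2} is rather the input to Lemma~\ref{L4} that justifies the generic stabilizer being $Z_p$.
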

\begin{proof}
  Suppose $2\nmid f$. Since $\Pp\hookrightarrow\Pq$ is $\Gp\text{-stable}$ we have
  $$\Gp\backslash\Pq=\Gp\backslash\Pp \sqcup \Gp\backslash\left(\Pq\setminus\Pp\right).$$
  Note that $\Gp\backslash\Pp=\{\Ogp(\wz)\}$ as $\Gp$ acts transitively on $\Pp$. Moreover, for any $\wx\in\Pq\setminus\Pp,~ \Sgp(\wx)=Z_p$ by Lemma \ref{L4},
  and so $\left|\Ogp(\wx)\right|=p\left(p^2-1\right)$. Since the orbit of each element in $\Pq\setminus\Pp$ has the same cardinality, the number
  of $\Gp\text{-orbits}$ in $\Pq\setminus\Pp$ is $$\left|\i\right|=\frac{\left|\Pq\setminus\Pp\right|}{\left|\Ogp(\wx)\right|}=\frac{\left(p^f+1\right)-\left(p+1\right)}{p\left(p^2-1\right)}=\frac{p^{f-1}-1}{p^2-1}.$$
  Suppose $2\mid f$. Since $\Fps\hookrightarrow\Fq$ and $\Pps\hookrightarrow\Pq$ is $\Gp\text{-stable}$, we have
  $$\Gp\backslash\Pq= \Gp\backslash\Pps \sqcup \Gp\backslash\left(\Pq\setminus\Pps\right).$$
  Note that $\Gp\backslash\Pps =  \{\Ogp(\wz),\Ogp(\wet)\}$ 
  by taking $f=1$ in Lemma \ref{L1}.
  Again, for any $\wx\in\Pq\setminus\Pps,~ \Sgp(\wx)\simeq Z_p$ by Lemma \ref{L4}, and so, $\left|\Ogp(\wx)\right|=p\left(p^2-1\right)$.
  So, by the same argument as in the previous case, the number of $\Gp\text{-orbits}$ in $\Pq\setminus\Pps$ is
       $$\left|\id\right|=\frac{\left|\Pq\setminus\Pps\right|}{\left|\Ogp(\wx)\right|}=\frac{\left(p^f+1\right)-\left(p^2+1\right)}{p\left(p^2-1\right)}=\frac{p\left(p^{f-2}-1\right)}{p^2-1}. $$
\end{proof}

\subsection{Decomposition of the restriction}

Recall that, if $H$ and $K$ are two subgroups of a group $G$, not necessarily distinct, then a double coset of some fixed element $x\in G$ is the set $\{hxk: h\in H, k\in K\}$ and it is denoted by $HxK$ \cite{HM}. The set of all the double cosets $HxK$ is denoted by $H\backslash G/K$.\\
For a mod $p$ character $\chi$ of $\Bq$, let $\text{ind}_{\Bq}^{\Gq}\chi$ be a principal series representation of $\Gq$. Then by Mackey's restriction formula, we have

\begin{equation}\label{EA}
{\left.\text{ind}_{\Bq}^{\Gq}\chi\right|}_{\Gp}\simeq\bigoplus\limits_{\gamma\in \Gamma}\text{ind}_{\Bqg}^{\Gp} \chig
\end{equation}
where $\Bqg \coloneqq\gamma\Bq \gamma^{-1}\cap \Gp$ and $\chig(b)=\chi\left(\gamma^{-1}b\gamma\right)$ for all $b\in\Bqg$ and where $\Gamma$ is a set of double coset representatives of $\Gp\backslash\Gq/\Bq$.

\begin{notation}
    For $x\in\Fq$, let $g_x\coloneqq\left(\begin{matrix}
    1 & 0 \\
    x & 1
\end{matrix}\right)$ and $w\coloneqq\left(\begin{matrix}
    0 & 1 \\
    1 & 0
\end{matrix}\right)\text{ in }\Gq$.
\end{notation}

\begin{remark}\label{R2}
    A complete set of coset representatives of $\Gq/\Bq$ is given by $\{g_x:x\in\Fq\}\cup\{w\}$, and under the identification $\Gq/\Bq\simeq \Pq$, we see that $g_x\Bq\mapsto \wx\text{ and }w\Bq\mapsto \widehat{\infty}$.
\end{remark}

Under the action of $\Gp$ on $\Gq/\Bq$, we have $\Sgp\left(g\Bq\right)=g \Bq g^{-1}\cap \Gp = B_q^g$. Hence, taking $g=g_x$ and using Remark \ref{R2}, we have
    \begin{equation}\label{EA1}         
    B_q^{g_x} = \Sgp(\wx),\text{ for all } x \in \Fq. 
    \end{equation}

Now, the double cosets in $\Gp\backslash\Gq/\Bq$ are in one-to-one correspondence with the orbits in $\Gp\backslash\left(\Gq/\Bq\right)$ and hence with the orbits in $\Gp\backslash\Pq$ by Remark \ref{R2}. Also,
$$   \Gp g_x \Bq  = \text{union of left cosets in the }\Gp \text{ orbit of }g_x\Bq.$$
   Moreover, $\Gp w\Bq=\Gp g_0\Bq$. Thus, if $\Gamma$ is a set of double coset representatives of $\Gp\backslash\Gq/\Bq$, then we may assume $\Gamma\subseteq\{g_x: x\in\Fq\}$. Again, by Remark \ref{R2},
    \begin{equation}\label{EA2}
        \Gamma\xleftrightarrow{~1-1~}\Gp\backslash \Pq\text{ where }  g_x\mapsto \Ogp(\wx).
    \end{equation} 
Thus, substituting \eqref{EA1} and \eqref{EA2} in \eqref{EA}, we get
$$\displaystyle{{\left.\text{ind}_{\Bq}^{\Gq}\chi\right|}_{\Gp}\simeq\bigoplus\limits_{\Ogp(\wx)\in \Gp\backslash \Pq}\text{ind}_{\Sgp(\wx)}^\Gp \chi^{g_x}}.$$

Now, Proposition \ref{P1} gives the description of all the $\Gp\text{-orbits}$ in $\Pq$ and therefore,
$$\displaystyle{{\left.\text{ind}_{\Bq}^{\Gq}\chi\right|}_{\Gp}\simeq\left\{\begin{array}{ll}
    \text{ind}_{\Sgp(\wz)}^\Gp \chi^{g_0}~\oplus\bigoplus\limits_{\wx\in\i}\text{ind}_{\Sgp(\wx)}^\Gp \chi^{g_x}, & \text{ if }2\nmid f, \\
   
      \text{ind}_{\Sgp(\wz)}^\Gp \chi^{g_0}~\oplus\text{ind}_{\Sgp(\wet)}^\Gp\chi^{g_{\eta}}~\oplus\bigoplus\limits_{\wx\in\id}\text{ind}_{\Sgp(\wx)}^\Gp \chi^{g_x},   & \text{ if }2\mid f.
\end{array}\right.}$$
Note that, by Lemma \ref{L1}, the stabilizers of $\wz$ and $\wet$ in $\Gp$ are $\Bp$ and $T_p$ respectively and $\chi^{g_0}={\chi|}_{\Bp}$.
Also, for all $\wx$ in $\i$ or $\id$, the corresponding field element $x$ is in $X$ (see the definition of $\i$ and $\id$) and so, $\Sgp(\wx)=Z_p$, for all $\wx$ in $\i$ or $\id$ by Lemma \ref{L4}. Lastly, we note that ${\left.\chi^{g_x}\right|}_{Z_p} ={\chi|}_{Z_p}$, for all $g_x\in\Gq$ and so $\text{ind}_{\Sgp(\wx)}^\Gp \chi^{g_x}=\text{ind}_{Z_p}^\Gp \left({\chi|}_{Z_p}\right)$, for all $\wx\in\i$ or $\wx\in\id$. Combining this information, we get

\begin{equation}\label{E1}
\displaystyle{{\left.\text{ind}_{\Bq}^{\Gq}\chi\right|}_{\Gp}\simeq\left\{\begin{array}{ll}
\text{ind}_{\Bp}^{\Gp} \left({\chi|}_{\Bp}\right)\oplus\left|\i\right|\text{ind}_{Z_p}^\Gp \left({\chi|}_{Z_p}\right),  
   & \text{ if }2\nmid f, \\
      \text{ind}_{\Bp}^{\Gp}\left({\chi|}_{\Bp}\right)\oplus \text{ind}_{T_p}^{\Gp}\chi^{g_{\eta}}\oplus
      \left|\id\right|\text{ind}_{Z_p}^{\Gp} \left({\chi|}_{Z_p}\right),
 & \text{ if }2\mid f.
\end{array}\right.}
\end{equation}

\begin{notation}
    For all positive integers $f$, any character of $\Bq$ is of the form $\chi_{r,s}$ for 
$0\leq r,s<q-1$, where $$\chi_{r,s}\left(\left(\begin{matrix}
    a & b \\
    0 & d
\end{matrix}\right)\right)=a^rd^s=a^{r-s}(ad)^s,\text{ for all }\left(\begin{matrix}
    a & b \\
    0 & d
\end{matrix}\right)\in \Bq.$$
Denote $\chi_r\coloneqq \chi_{r,0}$.
\end{notation}

Since $\text{ind}_{\Bq}^{\Gq}\left(\chi_{r,s}\right)\simeq\text{ind}_{\Bq}^{\Gq}\left(\chi_{r-s}\otimes\text{det}^s\right)\simeq\text{ind}_{\Bq}^{\Gq}\chi_{r-s}\otimes\text{det}^s$, for all $r,s\in\{0,1,\ldots,q-2\}$, it is sufficient to analyze the restriction ${\text{ind}_{\Bq}^{\Gq}\chi_r|}_{\Gp}$.

\begin{notation}
    The split torus of $\Gp$ is denoted by
    $S_p\coloneqq\left\{\left(\begin{matrix}
    a & 0 \\
    0 & d
\end{matrix}\right):a,d\in\F_p^*\right\}$.
\end{notation}


\begin{notation}
   Let $f \geq 1$. 
The character $\omega_{2f} : T_q \rightarrow \F_{q^2}^*$ is defined by 
$$\omega_{2f}\left(\left(\begin{matrix}
    a & b \\
    b\epsilon^2 & a
\end{matrix}\right)\right)=a+b\epsilon,\text{ for all }\left(\begin{matrix}
    a & b \\
    b\epsilon^2 & a
\end{matrix}\right)\in T_q.$$
All the characters of $T_q$ are then precisely of the form $\omega_{2f}^r$, where $0\leq r<q^2-1$.\\
In particular, for $f=1$, the character $\omega_2$ of $T_p$ is given as
$$\omega_2\left(\left(\begin{matrix}
    a & b \\
    b\eta^2 & a
\end{matrix}\right)\right)=a+b\eta,\text{ for all }\left(\begin{matrix}
    a & b \\
    b\eta^2 & a
\end{matrix}\right)\in T_p.$$
\end{notation}

\begin{notation}
    For a complex representation $V$ of a finite group, let $\overline{V}$ denote 
    the mod $p$ reduction of an integral model of $V$.
\end{notation}

\begin{theorem}\label{T1}
Let $0\leq r<q-1$. Then for a fixed character $\chi_r$ of $\Bq$,
\begin{enumerate}
\item $$\displaystyle{{\left.\mathrm{ind}_{\Bq}^{\Gq}\chi_r\right|}_{\Gp}\simeq\left\{\begin{array}{ll}

\mathrm{ind}_{\Bp}^{\Gp}\chi_{r}\oplus \frac{p^{f-1}-1}{p^2-1}  \left( \bigoplus\limits_{i=1}^{p-1} \mathrm{ind}_{B_p}^\Gp\chi_{i,(r-i)}\otimes\overline{\mathrm{St}} \right),  
   & \text{ if }2\nmid f, \\
   
      \mathrm{ind}_{\Bp}^{\Gp}\chi_{r}\oplus \mathrm{ind}_{T_p}^{\Gp}\omega_{2}^{r}\oplus
      \frac{p\left(p^{f-2}-1\right)}{p^2-1} \left( \bigoplus \limits_{i=1}^{p-1}\mathrm{ind}_{B_p}^\Gp \chi_{i,(r-i)} \otimes\overline{\mathrm{St}} \right),
 & \text{ if }2\mid f,
\end{array}\right.}$$ where $\mathrm{St}$ denotes the (complex) Steinberg representation of $\Gp$.

\item $$\displaystyle{{\left.\mathrm{ind}_{\Bq}^{\Gq}\chi_r\right|}_{\Gp}\simeq\left\{\begin{array}{ll}

\mathrm{ind}_{\Bp}^{\Gp}\chi_{r}\oplus \frac{p^{f-1}-1}{p^2-1}
\left(\bigoplus\limits_{i=0}^{p} \mathrm{ind}_{T_p}^\Gp\omega_{2}^{r+i(p-1)}\right),  
   & \text{ if }2\nmid f,
   \\
   
      \mathrm{ind}_{\Bp}^{\Gp}\chi_{r}\oplus \mathrm{ind}_{T_p}^{\Gp}\omega_{2}^{r}
      \oplus
      \frac{p\left(p^{f-2}-1\right)}{p^2-1} \left(\bigoplus\limits_{i=0}^{p} \mathrm{ind}_{T_p}^\Gp\omega_{2}^{r+i(p-1)}\right),
 & \text{ if }2\mid f.
\end{array}\right.}$$

\end{enumerate}
\end{theorem}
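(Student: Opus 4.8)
\emph{Plan.} The whole statement is a refinement of the decomposition \eqref{E1}, whose multiplicities $|\i|$ and $|\id|$ are already supplied by Proposition \ref{P1}. So the only real content is (a) to match the three characters occurring in \eqref{E1} with those in the statement, and (b) to decompose the single module $\mathrm{ind}_{Z_p}^{\Gp}(\chi_r|_{Z_p})$ in two different ways, giving parts (1) and (2). For (a): clearly $\chi_r|_{\Bp}=\chi_r$, and $\chi_r|_{Z_p}$ is the central character $\mathrm{diag}(a,a)\mapsto a^r$. In the even case I would verify $\chi_r^{g_\eta}=\omega_2^r$ by the one matrix computation that for $b=\left(\begin{smallmatrix}a&c\\c\eta^2&a\end{smallmatrix}\right)\in\Tp$ one has $g_\eta^{-1}bg_\eta=\left(\begin{smallmatrix}a+c\eta&c\\0&a-c\eta\end{smallmatrix}\right)\in\Bq$, whence $\chi_r^{g_\eta}(b)=(a+c\eta)^r=\omega_2^r(b)$.

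For part (2) I would factor the induction through $Z_p\subseteq\Tp\subseteq\Gp$. Since $[\Tp:Z_p]=(p^2-1)/(p-1)=p+1$ is prime to $p$, the inner module $\mathrm{ind}_{Z_p}^{\Tp}(\chi_r|_{Z_p})$ is semisimple; and as $\Tp\simeq\F_{p^2}^{*}$ is cyclic of order $p^2-1$ with $Z_p$ its subgroup of order $p-1$, it is the sum of the $p+1$ characters of $\Tp$ restricting to $\chi_r|_{Z_p}$ on $Z_p$, namely $\omega_2^{r+i(p-1)}$ for $0\le i\le p$. Transitivity of induction then gives $\mathrm{ind}_{Z_p}^{\Gp}(\chi_r|_{Z_p})\simeq\bigoplus_{i=0}^{p}\mathrm{ind}_{\Tp}^{\Gp}\omega_2^{r+i(p-1)}$, and substituting into \eqref{E1} proves (2) in both parities.

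For part (1) I would instead factor through $Z_p\subseteq S_p\subseteq\Bp\subseteq\Gp$. As $[S_p:Z_p]=p-1$ is prime to $p$, $\mathrm{ind}_{Z_p}^{S_p}(\chi_r|_{Z_p})$ is the sum of the $p-1$ characters of $S_p$ extending $\chi_r|_{Z_p}$, i.e. $\chi_{i,r-i}|_{S_p}$ for $1\le i\le p-1$. Applying $\mathrm{ind}_{S_p}^{\Bp}$ together with the projection formula $\mathrm{ind}_{S_p}^{\Bp}(\chi_{i,r-i}|_{S_p})\simeq\chi_{i,r-i}|_{\Bp}\otimes\mathrm{ind}_{S_p}^{\Bp}(\mathbf 1)$ yields $\mathrm{ind}_{Z_p}^{\Bp}(\chi_r|_{Z_p})\simeq\bigoplus_{i=1}^{p-1}\chi_{i,r-i}|_{\Bp}\otimes\mathrm{ind}_{S_p}^{\Bp}(\mathbf 1)$. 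Replacing $\mathrm{ind}_{S_p}^{\Bp}(\mathbf 1)$ by $\overline{\mathrm{St}}|_{\Bp}$ (see below), inducing from $\Bp$ to $\Gp$, and applying the projection formula a second time over $\Gp$ converts each summand into $\mathrm{ind}_{\Bp}^{\Gp}(\chi_{i,r-i})\otimes\overline{\mathrm{St}}$, which is exactly part (1).

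The crux, and the one step that is not purely formal, is the isomorphism $\overline{\mathrm{St}}|_{\Bp}\simeq\mathrm{ind}_{S_p}^{\Bp}(\mathbf 1)$ of $\overline{\F}_p[\Bp]$-modules. I would prove it by projectivity in defining characteristic: $\overline{\mathrm{St}}\simeq\mathrm{Sym}^{p-1}(\overline{\F}_p^2)$ is a projective $\overline{\F}_p[\Gp]$-module, and the restriction of a projective module to any subgroup is projective, so $\overline{\mathrm{St}}|_{\Bp}$ is projective. Every indecomposable projective $\overline{\F}_p[\Bp]$-module is of the form $\mathrm{ind}_{S_p}^{\Bp}(\xi)$ for a character $\xi$ of $S_p$, with head $\xi$ and dimension $[\Bp:S_p]=p$; since $\dim\overline{\mathrm{St}}|_{\Bp}=p$, it is a single such module. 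Its head is the one-dimensional space of $U_p$-coinvariants, spanned by the image of $y^{p-1}$, on which $S_p$ acts trivially because $d^{p-1}=1$; hence $\xi=\mathbf 1$ and $\overline{\mathrm{St}}|_{\Bp}\simeq\mathrm{ind}_{S_p}^{\Bp}(\mathbf 1)$. This is the place where characteristic $p$ is genuinely used; everything else is transitivity of induction, the projection formula, and the semisimplicity of induction from $Z_p$ along subgroups of index prime to $p$.
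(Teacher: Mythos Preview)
Your proof is correct and follows the same overall strategy as the paper: start from \eqref{E1}, identify $\chi_r^{g_\eta}=\omega_2^r$ by the conjugation computation, and decompose $\mathrm{ind}_{Z_p}^{\Gp}(\chi_r|_{Z_p})$ by factoring the induction through $S_p$ (for part (1)) or through $T_p$ (for part (2)), using that these intermediate indices are prime to $p$.

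The only substantive difference is in the last step of part~(1). The paper inducts directly from $S_p$ to $\Gp$ and invokes a result from \cite{GJ} for the isomorphism $\mathrm{ind}_{S_p}^{\Gp}\chi_{i,r-i}\simeq\mathrm{ind}_{\Bp}^{\Gp}\chi_{i,r-i}\otimes\overline{\mathrm{St}}$. You instead insert the intermediate group $\Bp$, prove $\overline{\mathrm{St}}|_{\Bp}\simeq\mathrm{ind}_{S_p}^{\Bp}(\mathbf 1)$ via projectivity of the Steinberg in defining characteristic and a head computation, and then apply the projection formula twice (once over $\Bp$, once over $\Gp$). Your argument is self-contained and in effect reproves the cited remark; the paper's is shorter but relies on the external reference.
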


\begin{proof}
 Replace $\chi$ by $\chi_r$ in the isomorphism in \eqref{E1}. Write ${\chi_r|}_{\Bp}=\chi_r$.\\
 If $2 \mid f$, then $\chi_r^{g_{\eta}}=\omega_2^r$ as characters of $T_p$. Indeed, for all $\left(\begin{matrix}
    a & b \\
    b\eta^2 & a
\end{matrix}\right)\in T_p$,
 \begin{align*}
\chi_r^{g_{\eta}}\left(\left(\begin{matrix}
    a & b \\
    b\eta^2 & a
\end{matrix}\right)\right)&=\chi_r\left((g_{\eta})^{-1}\left(\begin{matrix}
    a & b \\
    b\eta^2 & a
\end{matrix}\right)g_{\eta}\right)\\
&=\chi_r\left(\left(\begin{matrix}
    1 & 0 \\
    -\eta & 1
\end{matrix}\right)\left(\begin{matrix}
    a & b \\
    b\eta^2 & a
\end{matrix}\right)\left(\begin{matrix}
    1 & 0 \\
    \eta & 1
\end{matrix}\right)\right)\\
&=\chi_r\left(\left(\begin{matrix}
    a+b\eta & b \\
    0 & a-b\eta
\end{matrix}\right)\right)\\
&=(a+b\eta)^{r}~~(\text{since the matrix lies in $B_q$ because $\Fps \subseteq \Fq$})   \\
&=\omega_{2}^{r}\left(\left(\begin{matrix}
    a & b \\
    b\eta^2 & a
\end{matrix}\right)\right).
\end{align*}
Also, $\left|\i\right|=\frac{p^{f-1}-1}{p^2-1}\text{ and }
\left|\id\right|=\frac{p\left(p^{f-2}-1\right)}{p^2-1}$ by Proposition \ref{P1}. By the transitivity of induction, we have $$\text{ind}_{Z_p}^\Gp\left({\chi_r|}_{Z_p}\right)\simeq\text{ind}_{S_p}^{\Gp}\text{ind}_{Z_p}^{S_p}\left({\chi_r|}_{Z_p}\right)\simeq\text{ind}_{T_p}^{\Gp}\text{ind}_{Z_p}^{T_p}\left({\chi_r|}_{Z_p}\right).$$ 
We investigate each of these isomorphisms separately.

 \begin{enumerate}
 \item Note that $\text{ind}_{Z_p}^{S_p}\left({\chi_r|}_{Z_p}\right)$  is a sum of $p-1$  characters as $S_p$ is abelian of order prime to $p$. For a character $\psi$ of $S_p$, by Frobenius reciprocity, $\psi\hookrightarrow \text{ind}_{Z_p}^{S_p}\left({\chi_r|}_{Z_p}\right)$ if and only if ${\psi|}_{Z_p}={\chi_r|}_{Z_p}$.
     Now, for all $i\in\{1,2,\ldots, p-1\}$, one can easily verify that ${\chi_r|}_{Z_p}={\left.\chi_{i,(r-i)}\right|}_{Z_p}$, and $\chi_{i,(r-i)}$ are (distinct) characters of $S_p$. So we get $\text{ind}_{Z_p}^{S_p}\left({\chi_r|}_{Z_p}\right)\simeq\bigoplus\limits_{i=1}^{p-1}\chi_{i,(r-i)}$. So 
     \begin{align*}
\text{ind}_{Z_p}^\Gp\left({\chi_r|}_{Z_p}\right)&\simeq\text{ind}_{S_p}^{\Gp}\left(\bigoplus\limits_{i=1}^{p-1}\chi_{i,(r-i)}\right)  \\
     &\simeq\bigoplus\limits_{i=1}^{p-1}\text{ind}_{S_p}^{\Gp}\chi_{i,(r-i)}\\
     &\simeq\bigoplus\limits_{i=1}^{p-1}\left(\text{ind}_{\Bp}^{\Gp}\chi_{i,(r-i)}\otimes\overline{\text{St}}\right)~~(\text{by Remark 1 in \cite{GJ}}).
\end{align*}
 
 \item Similarly, $\text{ind}_{Z_p}^{T_p}\left({\chi_r|}_{Z_p}\right)$ is a sum of $p+1$ characters. Again, for any character $\psi$ of $T_p$, we have $\psi\hookrightarrow \text{ind}_{Z_p}^{T_p}\left({\chi_r|}_{Z_p}\right)$ if and only if ${\psi|}_{Z_p}={\chi_r|}_{Z_p}$.
     Here, one can easily verify that ${\chi_r|}_{Z_p}={\left.\omega_{2}^{r+i(p-1)}\right|}_{Z_p}$, for all $i\in\{0,1,\ldots, p\}$, and $\omega_{2}^{r+i(p-1)}$ are (distinct) characters of $T_p$. So we get $\text{ind}_{Z_p}^{T_p}\left({\chi_r|}_{Z_p}\right)\simeq\bigoplus\limits_{i=0}^{p}\omega_{2}^{r+i(p-1)}$. So
\begin{align*}
\text{ind}_{Z_p}^\Gp\left({\chi_r|}_{Z_p}\right)&\simeq\text{ind}_{T_p}^{\Gp}\left(\bigoplus\limits_{i=0}^{p}\omega_{2}^{r+i(p-1)}\right)  \\
     &\simeq \bigoplus\limits_{i=0}^{p}\text{ind}_{T_p}^{\Gp}\omega_{2}^{r+i(p-1)}. \qedhere
\end{align*} 
\end{enumerate}
\end{proof}

\section{Restriction in the cuspidal case}
We know that for a non self-conjugate complex character $\chi$ of $T_q$, one has 
\begin{eqnarray}\label{E2}
\text{ind}_{T_q}^{\Gq}\chi\simeq\Theta(\chi)\otimes \text{St},
\end{eqnarray}
where $\Theta(\chi)$ is the complex cuspidal representation of $\Gq$ associated to $\chi$. Taking the mod $p$ reduction of this isomorphism, we note that the mod $p$ cuspidal representation $\overline{\Theta(\chi)}$ appears as a factor in the mod $p$ reduction $\overline{\text{ind}_{T_q}^{\Gq}\chi}$. It seems tricky to study the restriction of the former representation to $\Gp$, so instead we study the restriction of the latter representation to $G_p$. In any case, this is consistent with our theme of studying the restrictions of induced
representations. \\

As in the principal series case, one might expect that an obvious induction from the anisotropic torus $T_p$ of $\Gp$ is contained in the restriction to $\Gp$ of the induction of the mod $p$ character $\chi$ from the anisotropic torus $T_q$ of $\Gq$: $$\text{ind}_{T_p}^{\Gp}\left({\chi|}_{T_p}\right)\hookrightarrow {\left.\text{ind}_{T_q}^{\Gq}\chi\right|}_{\Gp}.$$ 
By Frobenius reciprocity, this is equivalent to checking if ${\chi|}_{T_p}\hookrightarrow {\left.\text{ind}_{T_q}^{\Gq}{\chi}\right|}_{T_p}$. This follows whenever $T_p\subseteq T_q$ since ${\chi}\hookrightarrow {\left.\text{ind}_{T_q}^{\Gq}{\chi}\right|}_{T_q}$ as representations of $T_q$ (again by Frobenius reciprocity) and restriction to $T_p$ here makes sense. However, $T_p \subseteq T_q$ is only possible if $f$ is odd (and $\epsilon$ is taken to be $\eta$, see Lemma~\ref{L5}). Indeed, if $f$ is even and 
$\left( 
\begin{matrix}
    0 & 1 \\
    \eta^2 & 0
\end{matrix} 
\right)$
lies in $T_q$, then $\epsilon^2 = \eta^2$ so that $\epsilon = \pm \eta$
lies in $\Fps \subseteq \Fq$, which is a contradiction. See also Remark \ref{R6}. Nevertheless, we shall prove in the second part of Theorem \ref{T2} the somewhat surprising fact that irrespective of the parity of $f$, one always has
$\text{ind}_{T_p}^{\Gp}\omega_2^r \hookrightarrow {\left.\text{ind}_{T_q}^{\Gq}\omega_{2f}^r\right|}_{\Gp}$ even though
${\left.\omega_{2f}^r\right|}_{T_p}=\omega_2^r$ (only) when $T_p\subseteq T_q$.

In any case, it is of interest to find all the terms in the decomposition of the restriction
which we proceed to do now.

\subsection{The action of $\Gp$ on $\Pqs$}
The action of $\Gq$ on $\Pqs$ has two orbits $\Ogq(\wz)$ and $\Ogq(\we)$ (cf. Lemma \ref{L1}). Also, the orbits of  the $\Gp$ action on $\Pqs=\mathbb{P}^1(\F_{p^{2f}})$ is given by 
\eqref{E22}. 
Hence,
\begin{align}
\equalto{{\Pqs}}{}&\overset{\Gq\text{ action}}{=}\underbrace{\Ogq(\wz)}_{\Pq}\sqcup\Ogq(\we)\label{E3}\\
\mathbb{P}^1(\F_{p^{2f}})&\overset{\Gp\text{ action}}{=}\underbrace{\Ogp(\wz)\sqcup\Ogp(\wet)}_{\Pps}\sqcup\bigsqcup\limits_{\wx\in\id}\Ogp(\wx),\label{E4}
\end{align}
where $\id$ forms a complete set of representatives of the $\Gp\text{-orbits}$ in $\mathbb{P}^1(\F_{p^{2f}})\setminus\Pps$ and $\left|\id\right|=\frac{p\left(p^{2f-2}-1\right)}{p^2-1}$.

\begin{lemma}\label{L5}
    Let $q = p^f$ with $f$ odd. Then $x\in\Fps\setminus\Fp\text{ with }x^2\in\Fp\implies x\in\Fqs\setminus\Fq\text{ with }x^2\in\Fq$.
    \end{lemma}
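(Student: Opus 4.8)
The plan is to reduce everything to the subfield lattice of $\Fp$, namely the two standard facts $\F_{p^m}\subseteq\F_{p^n}\iff m\mid n$ (used already in the proof of Lemma \ref{L2}) and $\F_{p^m}\cap\F_{p^n}=\F_{p^{\gcd(m,n)}}$. Writing $\Fps=\F_{p^2}$, $\Fq=\F_{p^f}$ and $\Fqs=\F_{p^{2f}}$, the three conclusions $x\in\Fqs$, $x\notin\Fq$ and $x^2\in\Fq$ can each be read off directly, and only one of them will actually consume the hypothesis that $f$ is odd.

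First I would dispatch $x\in\Fqs$. Since $2\mid 2f$ for every $f$, we have $\Fps\subseteq\Fqs$ unconditionally, and as $x\in\Fps$ by hypothesis this gives $x\in\Fqs$ with no use of the parity of $f$. Likewise $x^2\in\Fp\subseteq\Fq$ since $1\mid f$, so the third conclusion $x^2\in\Fq$ is immediate as well.

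The one step where the oddness of $f$ genuinely enters is $x\notin\Fq$. Here I would compute $\Fps\cap\Fq=\F_{p^{\gcd(2,f)}}$, and since $f$ is odd, $\gcd(2,f)=1$, so $\Fps\cap\Fq=\Fp$. As $x\in\Fps\setminus\Fp$ lies in $\Fps$ but outside $\Fp$, it cannot lie in $\Fq$; together with $x\in\Fqs$ from the previous step this yields $x\in\Fqs\setminus\Fq$. Equivalently, $x\in\Fps\setminus\Fp$ means $[\Fp(x):\Fp]=2$, and $x\in\Fq$ would force $2\mid f$, contradicting the oddness of $f$.

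I do not expect a real obstacle: the whole statement is bookkeeping in the subfield lattice. The only point worth flagging is that the hypothesis $x\notin\Fp$ must be upgraded to $x\notin\Fq$, and this is precisely where $f$ odd is used via $\gcd(2,f)=1$. Were $f$ even one would instead have $\Fps\subseteq\Fq$ and this conclusion would fail, matching the even/odd dichotomy that organises the rest of the paper (cf. the definition of $X$ and Proposition \ref{P1}).
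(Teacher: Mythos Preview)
Your proof is correct and follows essentially the same route as the paper's: both immediately dispose of $x\in\Fqs$ and $x^2\in\Fq$ by the obvious inclusions, and then obtain $x\notin\Fq$ from $\Fps\cap\Fq=\Fp$ (the paper states this intersection directly, you justify it via $\gcd(2,f)=1$). The only difference is that you spell out the subfield-lattice reasoning in more detail.
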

  \begin{proof}
      Let $f$ be odd and $x\in\Fps\setminus\Fp\text{ with }x^2\in\Fp$. Then $x\in\Fqs$ and $x^2\in\Fq$. It remains to show that $x\notin\Fq$. If not, then $x \in \Fps \cap \Fq = \Fp$, a contradiction. 
      \end{proof}

\begin{notation}
    In view of Lemma~\ref{L5}, we can and do assume that $\epsilon = \eta$ when $f$ is odd.
\end{notation} 

\begin{lemma}\label{L3}
    With respect to the above notation,
    $$\Ogp(\wet)\subseteq\left\{\begin{array}{ll}
       \Ogq(\wz),  &\text{ if $2\mid f$,}  \\
        \Ogq(\we) \text{ with }\epsilon=\eta, & \text{ if $2\nmid f$.}
    \end{array}\right.$$
\end{lemma}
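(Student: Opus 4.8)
The plan is to use the fact, recorded in Lemma~\ref{L1}, that $\Pqs$ is the disjoint union of the two $\Gq$-orbits $\Ogq(\wz)$ and $\Ogq(\we)$. Each of these orbits is stable under $\Gq$ and hence, since $\Gp\subseteq\Gq$, it is also stable under $\Gp$. A $\Gp$-orbit that meets a $\Gp$-stable set is contained in it, so to determine which $\Gq$-orbit contains the whole orbit $\Ogp(\wet)$ it is enough to locate the single point $\wet$: the orbit $\Ogp(\wet)$ lies in whichever of $\Ogq(\wz)$, $\Ogq(\we)$ contains $\wet$. This reduces the lemma to a membership check for $\wet=[1:\eta]$ in each parity of $f$.

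First I would dispose of the case $2\mid f$. Here $\Fps\subseteq\Fq$, so the element $\eta\in\Fps\setminus\Fp$ in fact lies in $\Fq$, whence $\wet=[1:\eta]\in\Pq=\Ogq(\wz)$, the last equality being the transitivity part of Lemma~\ref{L1}. The stability observation then gives $\Ogp(\wet)\subseteq\Ogq(\wz)$.

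Next I would handle the case $2\nmid f$. By Lemma~\ref{L5}, the chosen $\eta\in\Fps\setminus\Fp$ with $\eta^2\in\Fp$ also satisfies $\eta\in\Fqs\setminus\Fq$ with $\eta^2\in\Fq$, so $\eta$ meets the exact conditions imposed on the parameter $\epsilon$; we therefore adopt the convention $\epsilon=\eta$. With this identification $\wet=[1:\eta]=[1:\epsilon]=\we\in\Ogq(\we)$, and stability again yields $\Ogp(\wet)\subseteq\Ogq(\we)$.

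The argument is little more than bookkeeping with the inclusions $\Fps\subseteq\Fq$ (when $2\mid f$) and $\Fps\hookrightarrow\Fqs$ (when $2\nmid f$), so I do not anticipate a genuine obstacle. The one point that deserves care is the legitimacy of setting $\epsilon=\eta$ in the odd case, which makes $\we$ and $\wet$ literally equal; this is precisely what Lemma~\ref{L5} secures by verifying that $\eta$ satisfies the defining properties of $\epsilon$.
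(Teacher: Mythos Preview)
Your proof is correct and follows essentially the same approach as the paper: in the even case you use $\Fps\subseteq\Fq$ to place $\wet$ inside $\Pq=\Ogq(\wz)$, and in the odd case you invoke Lemma~\ref{L5} to justify $\epsilon=\eta$ so that $\wet=\we$. The paper's proof is terser, appealing directly to the displayed decompositions~\eqref{E3} and~\eqref{E4}, but the underlying reasoning is identical.
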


\begin{proof}
    Suppose $2\mid f$. Then, $\Pps\subseteq\Pq$ and so by comparing \eqref{E3} and \eqref{E4},
       $\Ogp(\wet)\subseteq \Ogq(\wz)$. Suppose $2\nmid f$. 
       Then, taking $\epsilon=\eta$, clearly $\Ogp(\wet)\subseteq\Ogq(\we)$.
\end{proof}

\begin{proposition}\label{P2}
    Let $\epsilon\in
        \Fqs\setminus\Fq$ such that $\epsilon^2\in\Fq$. Then the decomposition of $\Ogq(\we)$ into its $\Gp\text{-orbits}$ is given by
    $$\Ogq(\we)=\left\{\begin{array}{ll}
  \bigsqcup\limits_{\wx\in\j}\Ogp(\wx),&\text{ if $2\mid f$,}\\
  
\Ogp(\we)\sqcup\bigsqcup\limits_{\wx\in\jd}\Ogp(\wx),& \text{ if $2\nmid f$,}
  \end{array}\right.$$
where $\j$ and $\jd\sqcup\{\we\}$ form a complete set of representatives of $\Gp\text{-orbits}\text{ in }\Ogq(\we)=\Pqs\setminus\Pq$ when $f$ is even and odd, respectively. Also, $\left|\j\right|=\frac{p^{f-1}\left(p^f-1\right)}{p^2-1}$ and $\left|\jd\right|=\frac{\left(p^{f-1}-1\right)\left(p^f+p-1\right)}{p^2-1}$.   
\end{proposition}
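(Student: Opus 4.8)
The plan is to read off the $\Gp$-orbit structure of $\Ogq(\we)$ from the $\Gp$-stabilizers of its points, and then count orbits by dividing $|\Ogq(\we)|$ by the relevant orbit sizes. By Lemma \ref{L1} we have $\Ogq(\we) = \Pqs \setminus \Pq = \{\wx : x \in \Fqs \setminus \Fq\}$, a set of cardinality $q^2-q$, so it suffices to pin down $\Sgp(\wx)$ for each $x \in \Fqs \setminus \Fq$. First I would compute these stabilizers exactly as in Lemma \ref{L4}: writing $g=\left(\begin{smallmatrix} a & b \\ c & d\end{smallmatrix}\right)\in\Gp$, the condition $g\cdot\wx=\wx$ is equivalent to $c+(d-a)x-bx^2=0$. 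Since the entries lie in $\Fp$, the stabilizer is strictly larger than $Z_p$ precisely when $\{1,x,x^2\}$ is $\Fp$-linearly dependent; and because $x\notin\Fp$ (as $x\notin\Fq$), this happens exactly when $[\Fp(x):\Fp]=2$, i.e.\ when $x\in\Fps\setminus\Fp$.

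This dichotomy is governed by the parity of $f$: when $2\mid f$ one has $\Fps\subseteq\Fq$, so no $x\in\Fqs\setminus\Fq$ can lie in $\Fps$; when $2\nmid f$ one has $\Fps\cap\Fq=\Fp$, so $\Fps\setminus\Fp$ is contained in $\Fqs\setminus\Fq$ and furnishes exactly the exceptional points. For $2\mid f$ the conclusion is then immediate: every $\wx\in\Ogq(\we)$ has $\Sgp(\wx)=Z_p$, hence every $\Gp$-orbit has size $|\Gp|/|Z_p|=p(p^2-1)$, and the number of orbits is $(q^2-q)/(p(p^2-1))=p^{f-1}(p^f-1)/(p^2-1)=|\j|$.

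For $2\nmid f$ I would first isolate the exceptional orbit. The exceptional points are $\{\wx : x\in\Fps\setminus\Fp\}=\Pps\setminus\Pp$; since $\Gp$ preserves $\Pps$, Lemma \ref{L1} applied in the base case $f=1$ shows this set is the single $\Gp$-orbit $\Ogp(\wet)=\Ogp(\we)$ (using the normalization $\epsilon=\eta$, compatible with Lemma \ref{L3}), with stabilizer a conjugate of $T_p$ and size $|\Gp|/|T_p|=p(p-1)$. Every remaining point of $\Ogq(\we)$ has stabilizer $Z_p$ and so lies in an orbit of size $p(p^2-1)$, so the number of these orbits is $|\jd|=\big((q^2-q)-(p^2-p)\big)/(p(p^2-1))$. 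A direct factorization $p^{2f}-p^f-p^2+p=p\,(p^{f-1}-1)(p^f+p-1)$ then rewrites this as $(p^{f-1}-1)(p^f+p-1)/(p^2-1)$, yielding the claimed decomposition $\Ogq(\we)=\Ogp(\we)\sqcup\bigsqcup_{\wx\in\jd}\Ogp(\wx)$.

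I expect the only genuine subtlety to lie in the odd case: correctly identifying the exceptional locus as $\Fps\setminus\Fp$ and recognizing that it forms a \emph{single} $\Gp$-orbit equal to $\Ogp(\we)$. This is where Lemma \ref{L1} in the base case $f=1$, the $\Gp$-stability of $\Pps$, and the normalization $\epsilon=\eta$ all come together. The remaining steps, namely the orbit-size computations and the factorization simplifying $|\jd|$, are routine.
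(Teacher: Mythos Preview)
Your proof is correct and follows essentially the same approach as the paper: identify the $\Gp$-stabilizers of points in $\Ogq(\we)=\Pqs\setminus\Pq$, isolate the exceptional orbit $\Ogp(\we)$ in the odd case via Lemma~\ref{L1} for $f=1$ and the normalization $\epsilon=\eta$, and count the remaining orbits by dividing cardinalities. The only cosmetic difference is that the paper obtains the stabilizer information by citing Proposition~\ref{P1} (applied to $\Pqs$) and Lemma~\ref{L3}, whereas you recompute it directly from the relation $c+(d-a)x-bx^2=0$; the underlying argument and arithmetic are identical.
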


\begin{proof}
    It is obvious that $\Ogp(\wz)\subseteq\Ogq(\wz)$. By Equations \eqref{E3} and \eqref{E4}, $\Ogq(\we)$ is a (disjoint) union of some $\Gp\text{-orbits}$ of $\wx\in\id$ and possibly $\Ogp(\wet)$. In fact, by Lemma \ref{L3}, $$\Ogq(\we)=\left\{\begin{array}{ll}
  \bigsqcup\limits_{\wx\in\j}\Ogp(\wx),&\text{ if $2\mid f$,}\\
  
\Ogp(\we)\sqcup\bigsqcup\limits_{\wx\in\jd}\Ogp(\wx),& \text{ if $2\nmid f$,}
  \end{array}\right.$$
  where we recall that $\eta=\epsilon$ when $f$ is odd and, $\j$ and $\jd$ are as described in the hypothesis. Their cardinalities can be computed as follows. Note that $\left|\Ogq(\we)\right|=q^2-q$, and when $f$ is odd, $\left|\Ogp(\we)\right|=p^2-p$ since 
  $\epsilon = \eta$.
  Also, $\left| \Ogp(\wx)\right|=p(p^2-1)$ for all $\wx\in\id$, as can be seen in the proof of Proposition \ref{P1} and hence for all $\wx\in\j,\jd\subseteq\id$. Thus,
  $$\left|\j\right|=\frac{\left|\Ogq(\we)\right|}{\left|\Ogp(\wx)\right|}=\frac{q^2-q}{p(p^2-1)}=\frac{p^{f-1}\left(p^f-1\right)}{p^2-1}\text{ and }$$
$$\left|\jd\right|=\frac{\left|\Ogq(\we)\setminus\Ogp(\we)\right|}{\left|\Ogp(\wx)\right|}=\frac{(q^2-q)-(p^2-p)}{p(p^2-1)}=\frac{\left(p^{f-1}-1\right)\left(p^f+p-1\right)}{p^2-1}.$$    
\end{proof}

\begin{remark}
  To summarize, the $\Gp\text{-orbits}$ of $\Pqs$ that appear in $\Ogq(\wz)$ and $\Ogq(\we)$ respectively, are given as follows:

    \begin{align*}
        \Pqs\overset{\text{Equation }\eqref{E3}}{=}&\Ogq(\wz)\sqcup\Ogq(\we)\overset{\text{Equation }\eqref{E4}}{=}\Ogp(\wz)\sqcup\Ogp(\wet)\sqcup\bigsqcup\limits_{\wx\in\id}\Ogp(\wx)\\
       \overset{\text{Proposition }\ref{P2}}{=}& \left\{\begin{array}{ll}
\underbrace{\Ogp(\wz)\sqcup\Ogp(\wet)\sqcup\bigsqcup\limits_{\wx\in\id\setminus\j}\Ogp(\wx)}_{\Ogq(\wz)}\sqcup\underbrace{\bigsqcup\limits_{\wx\in\j}\Ogp(\wx)}_{\Ogq(\we)},&\text{ if $2\mid f$,}\\  
\underbrace{\Ogp(\wz)\sqcup\bigsqcup\limits_{\wx\in\id\setminus\jd}\Ogp(\wx)}_{\Ogq(\wz)}\sqcup\underbrace{\Ogp(\we)\sqcup\bigsqcup\limits_{\wx\in\jd}\Ogp(\wx)}_{\Ogq(\we)},&\text{ if $2\nmid f$,}
       \end{array}\right.
    \end{align*}
where $\we = \wet$ if $2 \nmid f$.

\end{remark}

\subsection{Decomposition of the restriction}
For a character $\chi$ of $T_q$, by Mackey's restriction formula, we have
\begin{equation}\label{EB}    {\left.\text{ind}_{T_q}^{\Gq}\chi\right|}_{\Gp}\simeq\bigoplus\limits_{\gamma\in \Gamma'}\text{ind}_{\Tqg}^\Gp \chig,
\end{equation}
where $\Tqg=\gamma T_q \gamma^{-1}\cap \Gp$ and $\chig(t)=\chi\left(\gamma^{-1}t \gamma\right)$ for all $t\in\Tqg$ and where $\Gamma'$ is a set of double coset representatives in $\Gp\backslash \Gq/T_q$.

\begin{remark}\label{R6}
     For $\gamma= \mathrm{Id} \in \Gamma'$, we have $\Tqg=T_q\cap \Gp=\left\{\begin{array}{cc}
        Z_p, & \text{ if }2\mid f,  \\
         T_p, & \text{ if }2\nmid f.
    \end{array}\right.$ As a consequence, $T_p\subseteq T_q$ if and only if $f$ is odd.
\end{remark}

\begin{proof}
    Clearly
    $Z_p\subseteq T_q\cap\Gp$ always. Suppose $f$ is even then we show that $T_q\cap\Gp\subseteq Z_p$. Let $g=\left(\begin{matrix}
        a & b \\
        b\epsilon^2 & a
    \end{matrix}\right)\in T_q\cap\Gp$. Then  $a,b,b\epsilon^2\in\Fp$ and $\epsilon\in\Fqs\setminus\Fq$. Suppose, if possible, that  $b\neq 0$. Then $\epsilon^2\in\Fp$. This implies that $\epsilon^p=\pm\epsilon$ and consequently $\epsilon^q=\epsilon$ since $f$ is even. This is a contradiction as $\epsilon\notin\Fq$. So, $b=0$ and $g\in Z_p$, 
    as desired. In particular, if $T_p \subseteq T_q$, then $T_p \subseteq T_q \cap \Gp = Z_p$, a contradiction.\\
    Suppose $f$ is odd. If 
    $g=\left(\begin{matrix}
        a & b \\
        b\epsilon^2 & a
    \end{matrix}\right)\in T_q\cap\Gp$, then again $a,b,b\epsilon^2\in\Fp$, but 
    since $f$ is odd, $\epsilon = \eta \in \Fps \setminus \Fp$. 
    So $g$ is in $T_p$. So $T_q\cap\Gp = T_p$. 
    %
\end{proof}

\begin{notation}
    For $a,b\in\Fq$ with $b\neq 0$, define $g_{a,b}\coloneqq\left(\begin{matrix}
        1 & 0 \\
        a & b
    \end{matrix}\right)\in\Gq$.    
\end{notation}
    
  \begin{remark}\label{R7}
  Recall that $\mathrm{Stab}_{\Gq}(\we)=T_q$  
  and $\Gq/T_q\simeq\Ogq(\we)\text{ via }T_q\mapsto\we$, by Lemma \ref{L1}. Then
  a complete set of coset representatives of $\Gq/T_q$ is given by $\{g_{a,b}:a,b\in\Fq,b\neq 0\}$. Indeed,
  \begin{align*}
      \Gq/T_q\simeq\Ogq(\we)&=\{\wx:x\in\Fqs\setminus\Fq\}\\
      &=\{\widehat{a+b\epsilon}:a,b\in\Fq, b\neq 0\}\\
      &=\{g_{a,b}\cdot\we:a,b\in\Fq,b\neq 0\}
  \end{align*} 
  and since $T_q\mapsto\we$, we get $g_{a,b}T_q\mapsto g_{a,b}\cdot\we$.
\end{remark}
    
Under the action of $\Gp$ on $\Gq/T_q$, we have $\Sgp(gT_q)=gT_q g^{-1}\cap \Gp = T_q^g$. Hence by taking $g=g_{a,b}$ and using Remark \ref{R7}, we have
    \begin{equation}\label{EB1}
      T_q^{g_{a,b}} =  \Sgp(\widehat{a+b\epsilon}),
    \text{ for all } a,b \in \Fq, b\neq 0. 
    \end{equation}

Now, the double cosets in $\Gp\backslash\Gq/T_q$ are in one-to-one correspondence with the orbits in $\Gp\backslash\left(\Gq/T_q\right)$ and hence with the orbits in $\Gp\backslash\Ogq(\we)$ by Remark \ref{R7}. Also,
   $$   \Gp g_{a,b} T_q  = \text{union of left cosets in the }\Gp\text{-orbit of }g_{a,b}T_q.$$
Thus, if $\Gamma'$ is a set of double coset representatives of $\Gp\backslash\Gq/T_q$, then we may assume that $\Gamma'\subseteq\{g_{a,b}:a,b\in\Fq, b\neq 0\}$ and again by Remark \ref{R7},
    \begin{equation}\label{EB2}
        \Gamma'\xleftrightarrow{~1-1~}\Gp\backslash (\Ogq(\we))\text{ where } g_{a,b}\mapsto \Ogp(\widehat{a+b\epsilon}).
    \end{equation} 
Thus, substituting \eqref{EB1} and \eqref{EB2} in \eqref{EB}, we get
$$\displaystyle{{\left.\text{ind}_{T_q}^{\Gq}\chi\right|}_{\Gp}\simeq\bigoplus\limits_{\Ogp(\widehat{a+b\epsilon})\in\Gp\backslash (\Ogq(\we))}\text{ind}_{\Sgp(\widehat{a+b\epsilon})}^\Gp \chi^{g_{a,b}}}.$$
    
Now, Proposition \ref{P2} gives the description of all the $\Gp\text{-orbits}$ in $\Ogq(\we)$ and therefore, 
$$\displaystyle{{\left.\text{ind}_{T_q}^{\Gq}\chi\right|}_{\Gp}\simeq\left\{\begin{array}{ll}
    \bigoplus\limits_{\widehat{a+b\epsilon}\in\j }\text{ind}_{\Sgp(\widehat{a+b\epsilon})}^\Gp \chi^{g_{a,b}},  & \text{ if }2\mid f, \\
   
      \text{ind}_{\Sgp(\we)}^\Gp \chi^{g_{0,1}}~\oplus
      \bigoplus\limits_{\widehat{a+b\epsilon}\in\jd }\text{ind}_{\Sgp(\widehat{a+b\epsilon})}^\Gp \chi^{g_{a,b}}, & \text{ if }2\nmid f.
\end{array}\right.}$$
Note that for $\wx\in\j,\jd\subseteq\id$, we have $x\in\Fqs\setminus\Fps$ by the definition of $\id$, and for such $x, ~\Sgp(\wx)=Z_p$ by Lemma \ref{L4}. Thus, $\Sgp(\widehat{a+b\epsilon})=Z_p$, for all $\widehat{a+b\epsilon}\in\j$ and $\jd$. In this case, ${\left.\chi^{g_{a,b}}\right|}_{Z_p}={\chi |}_{Z_p}$. Also, for $f$ odd, $\epsilon=\eta$ and so $\Sgp(\we)=T_p$. Note that $g_{0,1}=\mathrm{Id}$ and so $\chi^{g_{0,1}}={\chi|}_{T_p}$. Combining this information, we get
\begin{equation}\label{E5}
\displaystyle{{\left.\text{ind}_{T_q}^{\Gq}\chi\right|}_{\Gp}\simeq\left\{\begin{array}{ll}

    \left| \j\right|\text{ind}_{Z_p}^\Gp \left({\chi|}_{Z_p}\right),  & \text{ if }2\mid f, \\
   
      \text{ind}_{T_p}^\Gp \left({\chi|}_{T_p}\right)~\oplus
      \left|\jd\right|\text{ind}_{Z_p}^\Gp \left({\chi|}_{Z_p}\right), & \text{ if }2\nmid f.
\end{array}\right.}
\end{equation}

\begin{theorem}\label{T2}
Let $0\leq r<q^2-1$. Then for the character $\omega_{2f}^r$ of $T_q$,
\begin{enumerate}

\item $$\displaystyle{{\left.\mathrm{ind}_{T_q}^{\Gq}\omega_{2f}^r\right|}_{\Gp}\simeq\left\{\begin{array}{ll}

    \frac{p^{f-1}\left(p^f-1\right)}{p^2-1}\left(\bigoplus\limits_{i=1}^{p-1}\mathrm{ind}_{B_p}^\Gp \chi_{i,(r-i)}\otimes\overline{\mathrm{St}}\right),  & \text{ if }2\mid f, \\   
      \mathrm{ind}_{T_p}^\Gp\omega_2^r~\oplus
      \frac{\left(p^{f-1}-1\right)\left(p^f+p-1\right)}{p^2-1}\left(\bigoplus\limits_{i=1}^{p-1}\mathrm{ind}_{B_p}^\Gp  \chi_{i,(r-i)}\otimes\overline{\mathrm{St}}\right), & \text{ if }2\nmid f.
\end{array}\right.}$$

\item $$\displaystyle{{\left.\mathrm{ind}_{T_q}^{\Gq}\omega_{2f}^r\right|}_{\Gp}\simeq\left\{\begin{array}{ll}

    \frac{p^{f-1}\left(p^f-1\right)}{p^2-1}\left(\bigoplus\limits_{i=0}^{p} \mathrm{ind}_{T_p}^\Gp\omega_{2}^{r+i(p-1)}\right),  & \text{ if }2\mid f, \\   
      \mathrm{ind}_{T_p}^\Gp\omega_2^r~\oplus
      \frac{\left(p^{f-1}-1\right)\left(p^f+p-1\right)}{p^2-1}\left(\bigoplus\limits_{i=0}^{p} \mathrm{ind}_{T_p}^\Gp\omega_{2}^{r+i(p-1)}\right), & \text{ if }2\nmid f.
\end{array}\right.}$$

\end{enumerate}
\end{theorem}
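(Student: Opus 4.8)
The plan is to specialize the general restriction formula \eqref{E5} to $\chi=\omega_{2f}^r$ and then reuse, essentially verbatim, the two decompositions of $\mathrm{ind}_{Z_p}^{\Gp}\left(\chi_r|_{Z_p}\right)$ established in the proof of Theorem \ref{T1}. What licenses this reuse is the observation that $\omega_{2f}^r$ and $\chi_r$ agree on the centre. First I would record the two restriction identities that feed into \eqref{E5}. On a scalar matrix $aI\in Z_p$ (that is, $a=a$, $b=0$ in the torus parametrisation) the definition of $\omega_{2f}$ gives $\omega_{2f}(aI)=a$, so $\omega_{2f}^r|_{Z_p}$ is the character $aI\mapsto a^r$, which is exactly $\chi_r|_{Z_p}$. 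For odd $f$ I would additionally compute $\omega_{2f}^r|_{T_p}$: using the normalisation $\epsilon=\eta$ supplied by Lemma \ref{L5} (which is precisely what makes $T_p\subseteq T_q$ in this case, cf.\ Remark \ref{R6}), a typical element of $T_p$ with entries $a,b\in\Fp$ has $\omega_{2f}$-value $a+b\eta$, which equals its $\omega_2$-value; hence $\omega_{2f}^r|_{T_p}=\omega_2^r$. These are the only genuinely new computations.

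With these identities in hand, substituting into \eqref{E5} turns the central summand into $|\j|\,\mathrm{ind}_{Z_p}^{\Gp}\left(\chi_r|_{Z_p}\right)$ when $f$ is even, and into $\mathrm{ind}_{T_p}^{\Gp}\omega_2^r\oplus|\jd|\,\mathrm{ind}_{Z_p}^{\Gp}\left(\chi_r|_{Z_p}\right)$ when $f$ is odd. Now I would invoke the transitivity chain $\mathrm{ind}_{Z_p}^{\Gp}\left(\chi_r|_{Z_p}\right)\simeq\mathrm{ind}_{S_p}^{\Gp}\mathrm{ind}_{Z_p}^{S_p}\left(\chi_r|_{Z_p}\right)\simeq\mathrm{ind}_{T_p}^{\Gp}\mathrm{ind}_{Z_p}^{T_p}\left(\chi_r|_{Z_p}\right)$ exactly as in Theorem \ref{T1}. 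For part (1) I take the $S_p$ route: $\mathrm{ind}_{Z_p}^{S_p}\left(\chi_r|_{Z_p}\right)\simeq\bigoplus_{i=1}^{p-1}\chi_{i,(r-i)}$ followed by $\mathrm{ind}_{S_p}^{\Gp}\chi_{i,(r-i)}\simeq\mathrm{ind}_{B_p}^{\Gp}\chi_{i,(r-i)}\otimes\overline{\mathrm{St}}$, yielding $\mathrm{ind}_{Z_p}^{\Gp}\left(\chi_r|_{Z_p}\right)\simeq\bigoplus_{i=1}^{p-1}\mathrm{ind}_{B_p}^{\Gp}\chi_{i,(r-i)}\otimes\overline{\mathrm{St}}$. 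For part (2) I take the $T_p$ route: $\mathrm{ind}_{Z_p}^{T_p}\left(\chi_r|_{Z_p}\right)\simeq\bigoplus_{i=0}^{p}\omega_2^{r+i(p-1)}$, yielding $\mathrm{ind}_{Z_p}^{\Gp}\left(\chi_r|_{Z_p}\right)\simeq\bigoplus_{i=0}^{p}\mathrm{ind}_{T_p}^{\Gp}\omega_2^{r+i(p-1)}$.

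Finally I would substitute the cardinalities $|\j|=\frac{p^{f-1}(p^f-1)}{p^2-1}$ and $|\jd|=\frac{(p^{f-1}-1)(p^f+p-1)}{p^2-1}$ from Proposition \ref{P2} to read off the four displayed formulas. The work is almost entirely bookkeeping once Theorem \ref{T1} is available; the one step I would treat most carefully — and the effective main obstacle — is the identity $\omega_{2f}^r|_{T_p}=\omega_2^r$ for odd $f$, since it rests on the coincidence $\epsilon=\eta$ forced by Lemma \ref{L5} and does not even make sense for even $f$, where $T_p\not\subseteq T_q$. I would flag explicitly that, despite this asymmetry, $\mathrm{ind}_{T_p}^{\Gp}\omega_2^r$ nonetheless occurs in part (2) for even $f$ as the $i=0$ summand with multiplicity $|\j|\geq 1$; this is exactly the parity-independent embedding $\mathrm{ind}_{T_p}^{\Gp}\omega_2^r\hookrightarrow{\left.\mathrm{ind}_{T_q}^{\Gq}\omega_{2f}^r\right|}_{\Gp}$ promised in the introduction, and the conceptually interesting payoff of the computation.
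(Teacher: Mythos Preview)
Your proposal is correct and follows essentially the same approach as the paper: specialize \eqref{E5} to $\chi=\omega_{2f}^r$, identify $\omega_{2f}^r|_{T_p}=\omega_2^r$ for odd $f$ via $\epsilon=\eta$, and then decompose $\mathrm{ind}_{Z_p}^{\Gp}\left(\omega_{2f}^r|_{Z_p}\right)$ along the $S_p$ and $T_p$ routes exactly as in Theorem~\ref{T1}, inserting the cardinalities $|\j|$ and $|\jd|$ from Proposition~\ref{P2}. The only cosmetic difference is that the paper redoes the Frobenius reciprocity computations with $\omega_{2f}^r|_{Z_p}$ in place of $\chi_r|_{Z_p}$, whereas you first record $\omega_{2f}^r|_{Z_p}=\chi_r|_{Z_p}$ and then invoke the earlier computation; these are the same argument.
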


\begin{proof}
 Replace $\chi$ by $\omega_{2f}^r$ in the isomorphism in \eqref{E5}. Then, ${\left.\omega_{2f}^r\right|}_{T_p}=\omega_2^r$ as characters of $T_p$ if $f$ is odd. Also, $\left|\j\right|=\frac{p^{f-1}\left(p^f-1\right)}{p^2-1}\text{ and }
\left|\jd\right|=\frac{\left(p^{f-1}-1\right)\left(p^f+p-1\right)}{p^2-1}$ by Proposition \ref{P2}. By the transitivity of induction, we have $$\text{ind}_{Z_p}^\Gp\left({\left.\omega_{2f}^r\right|}_{Z_p}\right)\simeq\text{ind}_{S_p}^{\Gp}\text{ind}_{Z_p}^{S_p}\left({\left.\omega_{2f}^r\right|}_{Z_p}\right)\simeq\text{ind}_{T_p}^{\Gp}\text{ind}_{Z_p}^{T_p}\left({\left.\omega_{2f}^r\right|}_{Z_p}\right).$$ 
We investigate each of these isomorphisms separately. 
 \begin{enumerate}

 \item Note that $\text{ind}_{Z_p}^{S_p}\left({\left.\omega_{2f}^r\right|}_{Z_p}\right)$ is a sum of $p-1$ characters as $S_p$ is abelian of order prime to $p$. For a character $\psi$ of $S_p$, by Frobenius reciprocity, $\psi\hookrightarrow \text{ind}_{Z_p}^{S_p}\left({\left.\omega_{2f}^r\right|}_{Z_p}\right)$ if and only if ${\psi|}_{Z_p}={\left.\omega_{2f}^r\right|}_{Z_p}$.
     Now, for all $i\in\{1,2,\ldots, p-1\}$, one can easily verify that ${\left.\omega_{2f}^r\right|}_{Z_p}={\left.\chi_{i,(r-i)}\right|}_{Z_p}$, where $\chi_{i,(r-i)}$ are (distinct) characters of $S_p$. So we get $\text{ind}_{Z_p}^{S_p}\left({\left.\omega_{2f}^r\right|}_{Z_p}\right)\simeq\bigoplus\limits_{i=1}^{p-1}\chi_{i,(r-i)}$. So,
\begin{align*}
\text{ind}_{Z_p}^\Gp\left({\left.\omega_{2f}^r\right|}_{Z_p}\right)&\simeq\text{ind}_{S_p}^{\Gp}\left(\bigoplus\limits_{i=1}^{p-1}\chi_{i,(r-i)}\right)  \\
     &\simeq \bigoplus\limits_{i=1}^{p-1}\left(\text{ind}_{S_p}^{\Gp}\chi_{i,(r-i)}\right)\\
     &\simeq\bigoplus\limits_{i=1}^{p-1}\left(\text{ind}_{\Bp}^{\Gp}\chi_{i,(r-i)}\otimes\overline{\text{St}}\right)~~( \text{by Remark 1 in \cite{GJ}}).
\end{align*}

\item Similarly, $\text{ind}_{Z_p}^{T_p}\left({\left.\omega_{2f}^r\right|}_{Z_p}\right)$ is a sum of $p+1$ characters. Again, for any character $\psi$ of $T_p$, we have $\psi\hookrightarrow \text{ind}_{Z_p}^{T_p}\left({\left.\omega_{2f}^r\right|}_{Z_p}\right)$ if and only if ${\psi|}_{Z_p}={\left.\omega_{2f}^r\right|}_{Z_p}$.
     Here, one can easily verify that ${\left.\omega_{2f}^r\right|}_{Z_p}={\left.\omega_{2}^{r+i(p-1)}\right|}_{Z_p}$, for all $i\in\{0,1,\ldots, p\}$, where $\omega_{2}^{r+i(p-1)}$ are (distinct) characters of $T_p$. So we get $\text{ind}_{Z_p}^{T_p}\left({\left.\omega_{2f}^r\right|}_{Z_p}\right)\simeq\bigoplus\limits_{i=0}^{p}\omega_{2}^{r+i(p-1)}$. Thus,
\begin{align*}
\text{ind}_{Z_p}^\Gp\left({\left.\omega_{2f}^r\right|}_{Z_p}\right)&\simeq\text{ind}_{T_p}^{\Gp}\left(\bigoplus\limits_{i=0}^{p}\omega_{2}^{r+i(p-1)}\right)  \\
     &\simeq \bigoplus\limits_{i=0}^{p}\text{ind}_{T_p}^{\Gp}\omega_{2}^{r+i(p-1)}. \qedhere
\end{align*} 
\end{enumerate}
\end{proof}

\begin{remark}\label{R4}
%
Consider $\omega_{2}$ both as a character of $T_p$ taking values in a mod $p$ field and as a characteristic zero field by taking its Teichm\"uller lift. Then, for $0\leq r<p^2-1$ with $\omega_{2}^{r}$ not self-conjugate, by using the mod $p$ reduction of the isomorphism in \eqref{E2} with $\chi=\omega_2^r$, we can further substitute $\overline{\Theta\left(\omega_{2}^{r}\right)}\otimes \overline{\mathrm{St}}$ for $\mathrm{ind}_{T_p}^{\Gp}\omega_{2}^{r}$ in the statements of Theorems \ref{T1} and \ref{T2}. We also remark that the mod $p$ reductions of complex cuspidal representations of $G_p$ have been studied in, for instance, \cite{GJ}, \cite{DR} and \cite{MS}.
 \end{remark}

 \begin{remark}
     Relative versions of the results in this paper studying the restrictions of mod $p$ representations of $\mathrm{GL}_2({\F_{q'}})$ of the type considered in this article to $\mathrm{GL}_2(\Fq)$ for $q'$ a power of $q$ could easily be further derived. 
     For brevity, 
     we have decided to treat only the absolute case $q = p$.
 \end{remark}

 \vspace{5mm}
        {\noindent \bf Acknowledgements:} The second author thanks Prof. Eknath Ghate for suggesting the problem, for his invitation to visit TIFR, Mumbai and for the warm hospitality extended during her visit. The second author also thanks Prof. Gautam Borisagar for his valuable inputs and is grateful to CSIR for the PhD fellowship.

\end{document}